\newtheorem{proposition}{Proposition}
\theoremstyle{definition}
\newtheorem{example}{Example}
\newtheorem{remark}{Remark}
\title{Estimation and goodness-of-fit testing for non-negative random variables with explicit Laplace transform}
\begin{document}

\author[1]{{Lucio} {Barabesi}}
\author[2,3]{{Antonio} {Di Noia}}
\author[1]{{Marzia} {Marcheselli}}
\author[1]{{Caterina} {Pisani}}
\author[4]{{Luca} {Pratelli}}
\affil[1]{Department of Economics and Statistics, University of Siena}
\affil[2]{Seminar for Statistics, Department of Mathematics, ETH Zurich}
\affil[3]{Faculty of Economics, Euler Institute, Università della Svizzera italiana}
\affil[4]{Italian Naval Academy}
\date{\today}
\setcounter{Maxaffil}{0}
\renewcommand\Affilfont{\itshape\small}
\maketitle
\let\thefootnote\relax\footnotetext{\emph{Email addresses:} lucio.barabesi(\sout{at})unisi.it (Lucio Barabesi), antonio.dinoia(\sout{at})stat.math.ethz.ch (Antonio Di Noia), marzia.marcheselli(\sout{at})unisi.it (Marzia Marcheselli), caterina.pisani(\sout{at})unisi.it (Caterina Pisani), luca\_pratelli(\sout{at})marina.difesa.it (Luca Pratelli).}
	
	\begin{abstract}
		Many flexible families of positive random variables exhibit non-closed forms of the density and distribution functions and this feature is considered unappealing for modelling purposes. However, such families are often characterized by a simple expression of the corresponding Laplace transform. Relying on the Laplace transform, we propose to carry out parameter estimation and goodness-of-fit testing for a general class of non-standard laws. We suggest a novel data-driven inferential technique, providing parameter estimators and goodness-of-fit tests, whose large-sample properties are derived. The implementation of the method is specifically considered for the positive stable and Tweedie distributions. A Monte Carlo study shows good finite-sample performance of the proposed technique for such laws.
	\end{abstract}
	
	\noindent {\bf Keywords:}
	Central Limit Theorem, consistent estimation, goodness-of-fit testing, Laplace transform, stable distribution, Tweedie distribution.

 \section{Introduction}\label{SecIntro}
	Large classes of positive random variables display a simple closed form of the Laplace transform, even if their density functions can be solely given by means of special functions or series, which eventually require rather complex algorithms for their computation. Owing to this shortcoming, field scientists often discard the use of such random variables, although they are appropriate for modelling real data.
	
	An archetype of such a class of laws is the positive stable distribution, which may be very suitable to model data with Paretian tails (see e.g. \citealp{NOLAN}). The density function of a positive stable random variable can be expressed by means of the Wright function (see e.g.\ \citealp{BAR}), which unfortunately is awkward to compute (see the algorithms proposed by \citealp{LUCHKO}). To this aim, \cite{BAR}, \cite{DUNN2005} and \cite{DUNN2008} suggest some approximation methods based on {\em ad-hoc} Fourier or Laplace inversion techniques. However, these algorithms can be time-consuming and possibly inadequate for evaluating the maximum-likelihood estimates, especially when large dataset are at disposal. In this case, the maximum-likelihood method could be even prohibitive and alternative techniques are required.
	
	Among these classes, the Tweedie distribution, owing to its flexibility, is useful for modelling data arising from a plethora of different frameworks (see e.g.\ \citealp{BARCEPE}, \citealp{DUNN2005}, \citealp{TWEE}).
	As a matter of fact, this family encompasses moderate heavy-tailed distributions, as well as light-tailed distributions and it also comprises the positive stable distribution as a special case (for more details, see the book of \citealp{NOLAN}). Actually, the Tweedie distribution is a tempered positive stable distribution. In addition, the Tweedie law may even model data with structural zeroes, since for some parameter ranges it is the mixture of the Dirac mass at zero and an absolutely-continuous positive distribution (see \citealp{AAL}). This feature is especially appealing when dealing with data arising from socio-economic or environmental phenomena (see \citealp{BARCEPE}, \citealp{HADU}), which indeed may produce structural zeroes. However, the distribution of a Tweedie random variable in turn involves the Wright function, giving rise to computational drawbacks.
	
	In general, a large body of literature has been devoted to the tempering of heavy-tailed laws (see e.g. the monograph by \citealp{GR}). Indeed, even if heavy-tailed distributions are well-motivated models in a probabilistic setting, extremely fat tails may be unrealistic for many real applications. Such a drawback has led to the introduction of models which are morphologically similar to the original distributions, even if they display lighter tails. In this setting, large classes of tempered distributions may be formulated as scale mixtures of a Tweedie random variable with a mixturing positive random variable (see \citealp{BARCE}, \citealp{TORRI}).  However, the corresponding density functions involve the generalized Mittag-Leffler function, which is even more challenging to compute that the Wright function (see \citealp{BAR} and references therein). In turn, the use of maximum-likelihood methods could be unfeasible for these models. Moreover, even least-square methods based on the theoretical and empirical Laplace or Fourier transforms could be inadequate, since they are likely to produce a criterion function which is not easily manageable, and possibly with multiple local minima. In addition, computational effort could be prohibitive for large datasets.
	
	In this paper, we propose a suitable class of parameter estimators for positive random variables showing a simple Laplace transform.  The proposed technique has connections with the procedures based on the probability generating function for integer-valued random variables by \cite{DINOIA} and \cite{di2023censoring}. The consistency and the large-sample distribution of the suggested estimators are obtained, as well as estimators of their asymptotic variance, thus naturally allowing to introduce appropriate goodness-of-fit (GOF) test statistics. GOF tests are especially welcome for the considered class of models, as no such proposals are present in statistical literature.
	
	The paper is organized as follows. In Section \ref{SecPreli} some preliminaries and background remarks are given. In Section \ref{SecCenso} the censoring strategy is described and the estimation procedure is introduced, as well as its asymptotic properties are derived. In Sections \ref{SecPS} and \ref{SecTW} the methodology is respectively adapted to the positive stable and Tweedie laws, and specific GOF tests are proposed. Section \ref{SecSimu} is devoted to numerical experiments. Finally, conclusions are drawn in Section \ref{discu}.

	\section{Preliminaries}\label{SecPreli}
	Let us consider a non-negative random variable (r.v.) $X$ defined on the probability space $(\Omega,{\mathcal A},P)$ and let $L_X$ be the Laplace transform of $X$, i.e.\
	\begin{gather*}
	L_X(s)=E(e^{-sX})
	\end{gather*}
	for $s\geq 0$. Obviously, if $F$ is the cumulative distribution function of $X$, it holds $L_X(s)=\int_0^1F(-\log t/s)\, dt$ for $s>0$. Moreover, if $X$ is an absolutely-continuous random variable, we have $L_X(t)=\int_0^\infty e^{-sx}\, F^\prime(x)\, dx.$ A large distribution family has a Laplace transform of type
	\begin{gather}\label{class1}
	L_X(s)=g(s^\gamma),
	\end{gather}
	where $\gamma$ is a parameter defined on a suitable subset of $\,]0,1]$, while $g$ is an appropriate function (see e.g.\ \citealp{BAR}, \citealp{TORRI}, and references therein). 
	\begin{example}\label{Jacobi}
	The family \eqref{class1} includes some rather exotic distributions, such as the generalized Jacobi laws, which have Laplace transforms given by	\begin{gather*}\label{jacobi1}
	L_X(s)={{s^\gamma}\over{{\rm sinh}(s^\gamma)}}
	\end{gather*}
	and
	\begin{gather}\label{jacobi2}
	L_X(s)={{1}\over{{\rm cosh}(s^\gamma)}},
	\end{gather}
	\noindent where $\gamma\in\,]0,1/2]$ (see \citealp{BIPITY} and  \citealp{DE2009b}). 
	\end{example}
	\begin{example}\label{ExaPS}
	The family with Laplace transform given by \eqref{class1} encompasses the positive stable distribution. Indeed, a positive stable r.v. with parameters $(\gamma,\lambda)$, denoted by ${\rm PS}(\gamma,\lambda)$, has Laplace transform
	\begin{gather}\label{LapPS}
	L_X(s)=e^{-\lambda s^\gamma},
	\end{gather}
	where $(\gamma,\lambda)\in\,]0,1]\,\times\,]0,\infty[$. For more details on the properties of this r.v.\ and its stochastic representations, see e.g.\ \cite{DEJAM}. The integer-valued counterpart of the positive stable distribution is considered in \cite{steutel1979discrete}, \cite{BARPRAT} and \cite{MARBABA}. Parameter estimation for the positive stable distribution is discussed in depth in Section \ref{SecPS}.

	It should be remarked that $g$ could be in turn the Laplace transform of a further positive (not necessarily absolutely-continuous) r.v.\ $V$, that is $g=L_V$. More precisely, if 
	\begin{gather*}
	X=V^{1/\gamma}Z,
	\end{gather*}
	where $Z$ is a positive stable r.v.\ with parameters $(\gamma,1)$ independent of $V$ (i.e.\ $X$ is a scale mixture of positive stable distributions), we have
	\begin{gather*}
	L_X(s)=E(e^{-sV^{1/\gamma}Z})=E(e^{-s^\gamma V})=L_V(s^\gamma)=g(s^\gamma).
	\end{gather*}
	Loosely speaking, in this setting the parameter $\lambda$ is assumed to be a positive  r.v.\ with a suitable distribution. The resulting family encompasses many distributions including the positive Linnik distribution which is obtained when  $V$ is distributed according to the Gamma distribution, see e.g.\ \cite{BARCE}, \cite{HUILLET2000} and \cite{JOSE}.  
	\end{example}

	\smallskip
	An even larger model family of distributions has a Laplace transform of type
	\begin{gather}\label{class2}
	L_X(s)=g(\{\theta+s\}^\gamma-\theta^\gamma),
	\end{gather}
	where $g$ is an appropriate function and $\theta\geq 0$. If $\theta>0$, the parameter $\gamma$ may also be defined on $\,]-\infty,1]$. Obviously, for $\theta=0$ the family given in \eqref{class1} is obtained.
In turn, the function $g$ in \eqref{class2} could be a Laplace transform of a further positive r.v.\ $V$. Distribution classes of such a type are considered by \cite{BARCE}, \cite{JAMES} and \cite{TORRI}, among others.\begin{example}\label{ExaTW}
	The typical member of the family with Laplace transform given by \eqref{class2} is the Tweedie distribution. A Tweedie r.v. with parameters $(\gamma, \lambda, \theta)$, denoted by TW$(\gamma, \lambda, \theta)$, has the Laplace transform
	\begin{gather}\label{LapTW}
	L_X(s)=e^{{\rm sgn}(\gamma)\lambda\{\theta^\gamma-(\theta+s)^\gamma\}},
	\end{gather}
	where $(\gamma,\lambda,\theta)\in\{[-\infty,0]\times\,]0,\infty[\times\,]0,\infty[\,\}\cup \{\,]0,1]\times\,]0,\infty[\times[0,\infty[\,\}$. The distribution has been popularized and analysed at length by \cite{HOUGAARD} and \cite{JOR}, following the seminal ideas described in \cite{TWEE}. The distribution has been often used in actuarial studies and ecology, among others (see e.g.\ \citealp{JOR} and \citealp{KENDAL2004}). For its integer-valued counterpart, see \cite{BAC} and \cite{BARPRA}. As to the random variate generation of a Tweedie r.v., see \cite{BARPRAT}, \cite{BARPRAT2}, and references therein. The family is very flexible and can be adopted as a model for datasets with complex and non-standard features, since it can simultaneously arrange fat tails and structural zeroes.
The inferential issues for the Tweedie law will be discussed in details in Section \ref{SecTW}.	\end{example}

	Albeit the described families are very appealing from a probabilistic perspective, the drawback with their use consists in the complex structure of the corresponding density and distribution functions, despite the simple forms of their Laplace transform. Even if some approximations are available in order to compute maximum-likelihood estimates (as proposed by \citealp{BAR}), their computation may be too time-consuming. Thus, alternative and simpler estimation methods may be welcome. These inferential methods should also produce GOF testing procedures which are particularly desirable, since no such proposals are present in literature.

	\section{Parameter estimation and GOF testing}\label{SecCenso} 

	Let the Laplace transform $L_X$ of the target random variable $X$  belong to the general class \eqref{class2},  specifically given by
	$L_X(s)=g(\lambda\{(\theta+s)^\gamma-\theta^\gamma\}), $
where $g$ is a suitable infinitely differentiable function on ${\mathbb R}\setminus\{0\}$, $\gamma\neq 0$ and $\lambda\kern-0.3mm>\kern-0.3mm 0$. In particular, $E(X)$ is not finite for $\theta=0$, $\lambda>0$ and $\gamma\in \, ]0,1[$. Our objective is to estimate the parameters and to obtain GOF test statistics by empirically estimating $L_X$ and its derivatives at appropriate values. 

 In this section, we introduce a methodology that is  applicable to classes satisfying \eqref{class2} and more  broadly to even more general classes.
 In the following sections, we will illustrate these general results in detail for positive stable distributions and Tweedie distributions.
 
 \smallskip  Let us start by noting that for any non-negative random variable $X$, the following relationship holds: 	\begin{gather}\label{syss}
	L_{X}^{(r)}(a)=(-1)^rE(X^re^{-aX}),\qquad \forall a>0,\ \forall r\in\mathbb N.
	\end{gather}
Even if $E(X)$ is not finite, for $a>0$ we have
	\begin{gather}\label{mom1}
E(X^re^{-aX})=E(Y^r_a)
	\end{gather} where $Y_a$ is the {\it exponentially censored random variable} $X\mathbf{1}_{\{aX<T\}},$ and $T$ is a standard exponential random variable, independent of $X$. Equation \eqref{mom1} suggests to introduce a variant of the method of moments by considering $E(X^re^{-aX})$ for a suitable $a>0$ when $L_X$ depends on some parameters to be estimated. To estimate the model parameters, a natural approach could rely on \eqref{syss} and on the empirical counterpart of $E(X^re^{-aX})$, where $r$ does not exceed the number of parameters. In practical applications, the number of parameters typically is $1$, $2$, or $3$, making this method feasible for implementation with real data.

\medskip\medskip\noindent Choosing a fixed deterministic point $a>0$ for calculating $L_X$ does not provide robust estimations of model parameters. This is because $L_X(a)$ could assume negligible values or values practically equal to one for many pairs of parameters $\lambda$ and $\gamma$, with $\theta=0$ or $\theta>0$, making any information on the parameters irrelevant. This drawback persists even when  multiple values of $a$ are fixed deterministically, and $L_X(a)$ with its empirical counterpart are considered. However, the choice of $a$, denoted by $a_*$ is crucial for this purpose. The \lq optimal' $a_*$ will depend on the model parameters, and we propose a data-driven method for obtaining a coherent estimator $A$ of $a_*$. To achieve this, first consider that $L_X([0,\infty[)=]p,1]$, where $p=P(X=0)$.

 If $p=0$, to avoid $Y_{a_*}$ being too censored or scarcely censored (i.e. $E(X^re^{-a_*X})$ being practically equal to zero or \lq infinite') and thus loosing information on parameters, we could set $a_*=L_X^{-1}(c_*)$ where $L^{-1}_X$ denotes the inverse function of $L_X$ and $c_*$ is an element of $[1/5,4/5]$. Moreover, in the setting of positive stable distribution, it is convenient to maximize $-c_*\log(c_*)$ for estimating $\gamma$. Since $c_*=1/e$ is the maximum point, in the following, we will take $$a_*=L_X^{-1}(1/e)$$ when $p=0$. Otherwise, $c_*$ will be chosen to be greater than $p$.  Now, let $X_1,\ldots,X_n$ be $n$ independent copies of the r.v.\ $X$ and let
	\begin{gather*}
	L_n(s)={{1}\over{n}} \sum_{i=1}^n e^{-sX_i}
	\end{gather*}
	be the empirical counterpart of $L_X$. Denote $F_n$ as the empirical cumulative distribution function associated with $X_1,\ldots,X_n$. It is worth noticing that for all $s>0$, it holds
	$$|L_X(s)-L_n(s)|=\Big|\int_0^1\big\{F(-\log t/s)-F_n(-\log t/s)\big\}\, dt\Big|\leq ||F-F_n||_\infty.$$
	Moreover, let $A$ be a further positive random variable, which is a function of $X_1,\ldots,X_n$ and is defined by the implicit equation 
	\begin{gather*}
	L_n(A)={{1}\over{n}} \sum_{i=1}^n e^{-AX_i}=e^{-1}.
	\end{gather*}
	when $p=0$. This implicit definition of  $A$ ensures that $A$ is data-driven, making it adaptive to the sample data $X_1,\ldots,X_n$. Since $\cup_{i=1}^n\{X_i=0\}$ is a negligible event when $p=0$, on the basis of the Glivenko-Cantelli Theorem,  $A$ is well defined and converges almost surely to $a_*=L^{-1}_X(e^{-1})$ because $$|L_X(A)-L_X(a_*)|=|L_X(A)-1/e|=|L_X(A)-L_n(A)|\leq ||F-F_n||_\infty.$$
If $p<1/e$, $A$ is well defined for large $n$ and the same asymptotic result holds true. Otherwise, if $p\geq 1/e$, we can consider $c_*=\frac {1+(e-1)p}{e}$ and define $A$ for large $n$ by the implicit equation
	\begin{gather*}
	L_n(A)={{1}\over{n}} \sum_{i=1}^n e^{-AX_i}=\frac {1+(e-1)\widehat p_n}{e}
	\end{gather*}
where $\widehat p_n={{1}\over{n}} \sum_{i=1}^n I_{\{X_i=0\}}$. For this choice of $c_*$ similar asymptotic results for $A$ are satisfied as when $c_*=1/e$.  Additionally, we denote
	\begin{gather*}
	m_{r,a_*}=E(Y_{a_*}^r)=E(X^re^{-a_*X})
	\end{gather*}
	with empirical counterpart
	\begin{gather*}
	\widehat m_{r,A}=
	{{1}\over{n}} \sum_{i=1}^n X_i^re^{-AX_i}.
	\end{gather*}
In the following Proposition we obtain the large-sample properties of the r.v.s $A$ and $\widehat m_{1,A},\ldots,\widehat m_{k,A}$, with $k\geq 1$, by means of a suitable use of the Delta Method.

	\begin{proposition}\label{ProGen}
	If $p<1/e$ then the random vector
	$(\widehat m_{1,A},\ldots,\widehat m_{k,A},A)$ converges almost surely to the vector $(m_{1,a_*},\ldots,m_{k,a_*},a_*)$, while the random vector $\sqrt n(\widehat m_{1,A}-m_{1,a_*},\ldots,\widehat m_{k,A}-m_{k,a_*},A-a_*)$ converges in distribution to the multivariate normal law $\mathrm{N}_{k+1}(0,\Sigma)$ as $n\to\infty$. Here, $\Sigma$ is the variance-covariance matrix of the random vector $(V_1,\ldots,V_k,W)$, with 
	\begin{gather*}
		V_r=e^{-a_*X}\Big(X^r-{{m_{r+1,a_*}}\over{m_{1,a_*}}}\Big),
	\end{gather*}
	for $r=1,\ldots,k$, and $$W={{e^{-a_*X}}\over{m_{1,a_*}}}.$$ A consistent estimator of $\Sigma$ is given by the sample variance-covariance matrix of the random vectors $(\widehat V_{1,i},\ldots,\widehat V_{k,i},\widehat W_i)$, $i=1,\ldots,n$, where 
	\begin{gather*}\label{varest}
		\widehat V_{r,i}= e^{-AX_i} \Big(X^r_i-{{\widehat m_{r+1,A}}\over{\widehat m_{1,A}}}\Big),\qquad \widehat W_i={{e^{-AX_i}}\over{\widehat m_{1,A}}}.
	\end{gather*}
	\end{proposition}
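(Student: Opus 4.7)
The plan is to combine three ingredients: a Lipschitz bound plus the SLLN to transfer the a.s.\ convergence $A\to a_*$ (already established in the preceding discussion via Glivenko--Cantelli) to the statistics $\widehat m_{r,A}$; a one-step linearization of the implicit equation $\widehat\mu_0(A)=e^{-1}$ together with the multivariate CLT for i.i.d.\ sums; and a plug-in argument for $\widehat\Sigma$. For the a.s.\ convergence of $\widehat m_{r,A}$, set $\widehat\mu_r(s)=n^{-1}\sum_i X_i^r e^{-sX_i}$ so that $\widehat m_{r,A}=\widehat\mu_r(A)$, and apply the mean-value inequality to obtain
\begin{equation*}
|\widehat\mu_r(A)-\widehat\mu_r(a_*)|\leq |A-a_*|\cdot\frac{1}{n}\sum_{i=1}^{n}X_i^{r+1}e^{-\min(A,a_*)X_i}.
\end{equation*}
Eventually $\min(A,a_*)\geq a_*/2$ a.s., so the right-hand sum is dominated by $n^{-1}\sum X_i^{r+1}e^{-(a_*/2)X_i}\to E(X^{r+1}e^{-(a_*/2)X})<\infty$ (SLLN). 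Since $|A-a_*|\to 0$ a.s., this term vanishes, and adding $\widehat m_{r,a_*}\to m_{r,a_*}$ (again SLLN) proves $\widehat m_{r,A}\to m_{r,a_*}$ a.s. The same argument shows $\widehat m_{r,t_n}\to m_{r,a_*}$ a.s.\ for any sequence $t_n\to a_*$ a.s., a fact used repeatedly below.

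For asymptotic normality, I linearize the defining equation by a one-term Taylor expansion around $a_*$,
\begin{equation*}
0=\widehat\mu_0(A)-L_X(a_*)=(\widehat\mu_0(a_*)-e^{-1})-\widehat m_{1,\xi_n}(A-a_*),
\end{equation*}
where $\xi_n$ lies between $A$ and $a_*$, so $\widehat m_{1,\xi_n}\to m_{1,a_*}$ a.s. Combined with the ordinary CLT for $\sqrt n(\widehat\mu_0(a_*)-e^{-1})$ and Slutsky, this yields
\begin{equation*}
\sqrt n(A-a_*)=\sqrt n\bigl(\overline W_n-E(W)\bigr)+o_P(1),
\end{equation*}
with $W=e^{-a_*X}/m_{1,a_*}$. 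An analogous Taylor expansion of $\widehat\mu_r$ at an intermediate point $\eta_n$, using $\widehat\mu_r'(s)=-\widehat m_{r+1,s}$ and $\widehat m_{r+1,\eta_n}\to m_{r+1,a_*}$, gives
\begin{equation*}
\sqrt n(\widehat m_{r,A}-m_{r,a_*})=\sqrt n(\widehat\mu_r(a_*)-m_{r,a_*})-m_{r+1,a_*}\sqrt n(A-a_*)+o_P(1).
\end{equation*}
Substituting the linear expansion for $\sqrt n(A-a_*)$ and collecting terms identifies the right-hand side with $\sqrt n(\overline V_{r,n}-E(V_r))+o_P(1)$, with $V_r$ as in the statement. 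The multivariate CLT applied to the i.i.d.\ vectors $(V_{1,i},\ldots,V_{k,i},W_i)$ plus a joint Slutsky step deliver the claimed $\mathrm N_{k+1}(0,\Sigma)$ limit.

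Consistency of $\widehat\Sigma$ then follows by plug-in: the a.s.\ convergences $A\to a_*$ and $\widehat m_{r,A}\to m_{r,a_*}$ for $r=1,\ldots,k+1$, together with the same Lipschitz dominating argument applied to averages such as $n^{-1}\sum e^{-2AX_i}X_i^{\ell}$, show that each entry of the sample covariance matrix of $(\widehat V_{1,i},\ldots,\widehat V_{k,i},\widehat W_i)$ converges a.s.\ to the corresponding entry of $\Sigma$. The main technical obstacle is the linearization step, where the derivative $-\widehat m_{1,\xi_n}$ is evaluated at the random intermediate point $\xi_n$; the Lipschitz dominating argument of the first paragraph is precisely what permits this substitution without appealing to a separate equicontinuity or uniform-convergence theorem.
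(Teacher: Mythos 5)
Your proof is correct and follows essentially the same route as the paper: mean-value linearization of the empirical Laplace transform and its derivatives at the random point $A$, control of the intermediate-point averages via the SLLN, reduction to i.i.d.\ sums of $(V_1,\dots,V_k,W)$ followed by the multivariate CLT, and a plug-in argument for the covariance estimator. The only place you genuinely diverge is the treatment of $\sqrt n(A-a_*)$: the paper expands the inverse map $L_n^{-1}$ at $L_n(a_*)$ and carries an $o_P\bigl(\sqrt n\{e^{-1}-L_n(a_*)\}\bigr)$ remainder, whereas you apply the mean value theorem to $L_n$ itself and solve the exact identity $\sqrt n(A-a_*)=\sqrt n\{\widehat\mu_0(a_*)-e^{-1}\}/\widehat m_{1,\xi_n}$, which dispenses with that remainder term and is, if anything, slightly cleaner.
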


	\begin{proof}
	For simplicity of notation, let us denote $a=a_*$. If $u=(u_1,\ldots,u_k)$ is an element of $\mathbb{R}^k$ and $t\in\mathbb{R}$, let us consider the r.v.\
	\begin{gather*}
		U_n=\sqrt n\Big\{\sum_{r=1}^k u_r(\widehat m_{r,A}-m_{r,a})+t(A-a)\Big\}.
	\end{gather*}
	On the basis of the Mean Value Theorem there exists a map $h$ defined on $]0,\infty[\,^2$ such that, for any $b,x>0$, it holds
	\begin{gather*}
		e^{-bx}=e^{-ax}-xe^{-h(b,x)x}(b-a),
	\end{gather*}
	with $h(b,x)\in\,]\min(b,a),\max(b,a)[$. Thus, we have 
	\begin{gather*}
		U_n=\sqrt n\Big\{\sum_{r=1}^k u_r(\widehat m_{r,a}-m_{r,a})+(A-a)(t-\sum_{r=1}^k{u_r\over n}\sum_{i=1}^nX_i^{r+1}e^{-h(A,X_i)X_i})\Big\}.
	\end{gather*}
	Since $A$ converges almost surely to $a$, the Law of Large Numbers and the Continuous Mapping Theorem imply that
	\begin{gather*}
		\lim_n{1\over n}\sum_{i=1}^nX_i^{r+1}e^{-h(A,X_i)X_i}=m_{r+1,a}
	\end{gather*}
	almost surely. Moreover, if $p=0$ there exists a negligible event $H$ such that $s\mapsto L_n(s)(\omega)$ is a strictly decreasing function for any $\omega\in H^c$.  These functions are invertible for any $\omega\in H^c$ and their inverse mappings are differentiable at $L_n(a)(\omega)$ with derivative $1/L^\prime_n(a)(\omega)$.  Then $\sqrt n(A-a)$ coincides almost surely with
	\begin{gather*}
		\sqrt n\{L^{-1}_n(e^{-1})-L^{-1}_n(L_n(a))\}\kern-0.6mm=\kern-0.6mm{{\sqrt{n}}\over{L^\prime_n(a)}}\{e^{-1}-L_n(a)\}\kern-0.5mm+\kern-0.5mm o_P(\sqrt n\{e^{-1}-L_n(a)\})
	\end{gather*}
	 Since $L^\prime_n(a)$ converges almost surely to $-m_{1,a}$, and $o_P(\sqrt n\{e^{-1}-L_n(a)\})$ coincides with $o_P(\sqrt n\{L_X(a)-L_n(a)\})=\sqrt n\{L_X(a)-L_n(a)\}o_P(1),$ from the convergence in distribution of $\sqrt n\{L_X(a)-L_n(a)\}$ it follows that $U_n$ has the same large-sample behaviour of the r.v.\
	\begin{gather*}
		S_n=\sqrt n\Big[\sum_{r=1}^k u_r(\widehat m_{r,a}-m_{r,a})+{{1}\over{m_{1,a}}}\{L_n(a)-e^{-1}\}(t-\sum_{r=1}^ku_rm_{r+1,a})\Big].
	\end{gather*} 
	 This holds true for large $n$, even when $p\in]0,1/e[$.
	Observe that
	\begin{align*}
		S_n=\,&\sqrt n\Big[\sum_{r=1}^k {{u_r}\over{n}}\sum_{i=1}^n\big\{(X_i^{r}e^{-a X_i}-m_{r,a})
		-{{m_{r+1,a}}\over{m_{1,a}}}(e^{-a X_i}-e^{-1})\big\}\\
		&+{{t}\over{nm_{1,a}}}\sum_{i=1}^n(e^{-aX_i}-e^{-1})\Big].
	\end{align*}
	Thus, the classical Central Limit Theorem ensures that $S_n$ converges in distribution to the normal law $\mathrm{N}(0,\sigma^2)$,
	where $\sigma^2={\rm Var}(\sum_{r=1}^k u_rV_r+tW).$ The result follows from the Cramér-Wold Theorem and the Law of Large Numbers. In particular, the sample variance-covariance
	matrix of $(\widehat V_{1,i},\ldots,\widehat V_{k,i},\widehat W_i)$ is a consistent estimator of $\Sigma$, since $(\widehat m_{1,A},\ldots,\widehat m_{k,A},A)$ converges almost surely to $(m_{1,a},\ldots,m_{k,a},a)$.
	\end{proof}

\begin{remark}
From Proposition \ref{ProGen}, it follows that $\sqrt n(A-a_*)$ converges in distribution to the normal law $\mathrm {N}(0,\{L_X(2a_*)-e^{-2}\}/m_{1,a_*}^2)$ and has the same large-sample behaviour as the r.v.\ $
	{{\sqrt n}\over{m_{1,a_*}}}\big({{1}\over{n}}\sum_{i=1}^n e^{-a_*X_i}-e^{-1}\big).$
\end{remark}

\begin{remark}
If $p\geq 1/e$, by taking $a_*=L_X^{-1}(\frac{1+p(e-1)}{e})$ a similar proposition can be proven, with some changes in the matrix $\Sigma$ and the random variables $V_r$ and $W$.\end{remark}

\medskip 	Let us now suppose that the distribution of $X$ depends on $k$ parameters, say $\alpha_1,\ldots,\alpha_k$. Let us also suppose that, on the basis of \eqref{syss}, there exist $k$ differentiable functions $h_1,\ldots,h_k$ and a natural number $r\leq k$, such that $\alpha_j=h_j(m_{1,a_*},\ldots,m_{r,a_*},a_*)$ for $j=1,\ldots,k$. Some $h_j$ could be solely functions of $a_*$ and in this case we take $r=0$. Thanks to Proposition \ref{ProGen}, consistent and large-sample normal estimators ${\widehat\alpha_j}$ of $\alpha_j$, for $j=1,\ldots,k$, are given by
	\begin{gather*}
	{\widehat \alpha_j}=h_j(\widehat m_{1,A},\ldots,\widehat m_{r,A},A).
	\end{gather*}
	Moreover, if $L_X(s)=L_X(s;\alpha_1,\ldots,\alpha_k)$ we also adopt the notation
	\begin{gather*}
	{\widehat L_X(s)}=L_X(s;\widehat\alpha_1,\ldots,\widehat\alpha_k).
	\end{gather*}
On the basis of the previous discussion, considering the empirical version of \eqref{syss}, a natural GOF test statistic is given by\begin{gather}\label{test1}
	T_n=\sqrt n\{\widehat m_{r+1,A}+(-1)^r\widehat L_X^{(r+1)}(A)\} ,
	\end{gather}
where $\widehat L_X^{(r)}$ represents the $r$-th derivative of $\widehat L_X$. Other GOF test statistics can be obtained by substituting $r+1$ with a natural number $q>r+1$. If $\widehat L_X(A)$ is not a degenerate random variable, a further GOF test statistic is given by
	\begin{gather}\label{test2}
	T^\prime_n=\sqrt n\{e^{-1}-\widehat L_X(A)\}.
	\end{gather}
\begin{remark}
Note $T_n$ (or $T^\prime_n$) can be written as $$\sqrt n\big\{\psi(\widehat m_{1,A},\ldots,\widehat m_{r+1,A},A)-\psi(m_{1,a_*},\ldots,m_{r+1,a_*},a_*)\big\}$$ where $\psi$ is a differentiable function. From Proposition \ref{ProGen} and the Delta Method, it follows that $T_n$ (or $T^\prime_n$) converges in distribution to a centered normal law. The asymptotic variance of $T_n$ (or $T^\prime_n$) will be explicitly derived for positive stable distribution (or Tweedie distribution) in the following two sections. \end{remark}

	\begin{example}
	An interesting, even if simple, illustration of the suggested methodology is obtained by considering a r.v.\ $X$ with law defined by means of the Laplace transform \eqref{jacobi2}. If $c=\log(e+\sqrt{e^2-1})$, from
	\begin{gather*}
	L_X(a)={{1}\over{{\rm cosh}(a^\gamma)}}=e^{-1},
	\end{gather*}
	we promptly obtain $a_*=c^{1/\gamma}$. Thus, since $k=1$ and $r=0$ in this case, we trivially have
	\begin{gather*}
	\gamma=h_1(a_*)={{\log(c)}\over{\log(a_*)}}
	\end{gather*}
	and an estimator $\widehat\gamma$ of $\gamma$ is given by
	\begin{gather*}
	\widehat\gamma={{\log(c)}\over{\log(A)}}.
	\end{gather*}
	In addition, since it holds 
	\begin{gather*} 
	m_{1,a_*}={{c\,{\rm sinh}(c)\gamma}\over{e^2a_*}},
	\end{gather*}
	a GOF test statistic is given by
	\begin{gather*}
	T_n=\sqrt n\{\widehat m_{1,A}-e^{-2}c\, {\rm sinh}(c)A^{-1}\widehat\gamma\}.
	\end{gather*}
	By using Proposition \ref{ProGen} and the Delta Method, $\sqrt n(\widehat\gamma-\gamma)$ and $T_n$ converge in distribution to  centered normal laws, where the variances can be consistently estimated by substituting $\widehat\gamma$ and $A$ for $\gamma$ and $a_*$ in the corresponding theoretical expressions. 
 	\end{example}

	The results for the distribution family considered in the previous example, depending on a single parameter, are appealing and relatively simple. A more complicated setting occurs for a family depending on two or three parameters, as shown in Section \ref{SecPS} and in Section \ref{SecTW}, even if the results are rather similar in concept. In this more complex framework the following result can be useful.
	
		\begin{remark}\label{rem:mom} If $L_X(s)=g(\lambda\{(\theta+s)^\gamma-\theta^\gamma\})$, after some algebra, we also have
		\begin{gather}\label{mom2}
			E(Y^2_a)=E(Y_a)^2{{g^{\prime\prime}
					(\lambda\{(\theta+a)^\gamma-\theta^\gamma\})}\over{g^\prime
					(\lambda\{(\theta+a)^\gamma-\theta^\gamma\})^2}}
			+E(Y_a){{1-\gamma}\over{\theta+a}},
		\end{gather}
		and 	\begin{gather}\label{mom3}
			\kern-2.7mm E(Y^3_a)
			\kern-0,9mm=\kern-0.9mm {{E(Y_a)^3 g^{\prime\prime\prime}
					(\lambda\{(\theta\kern-0.6mm+\kern-0.6mm a)^\gamma\kern-0.6mm-\kern-0.5mm\theta^\gamma\})}\over{g^\prime(\lambda\{(\theta\kern-0.6mm+\kern-0.6mm a)^\gamma\kern-0.6mm-\kern-0.5mm\theta^\gamma\})^3}}\kern-0,7mm+\kern-0,7mm{{1\kern-0.6mm-\kern-0.6mm\gamma
				}\over{\theta\kern-0.4mm+\kern-0.4mm a}}\Big\{3E(Y^2_a)\kern-0,6mm+\kern-0,6mm E(Y_a)
			\,{{2\gamma\kern-0.4mm-\kern-0.4mm 1}\over{\theta+a}}\Big\}.
		\end{gather}
	\end{remark}

	\section{Inference for the positive stable distribution}\label{SecPS} 

	In this section we assume that $X$ is distributed with a positive stable law. The corresponding Laplace transform \eqref{LapPS} is obtained from class \eqref{class2} by setting $g(x)=e^{-\lambda x}$ and $\theta=0$. Thus, the model involves two parameters $(\gamma,\lambda)$, where the parameter space is $\gamma\in\, ]0,1]$ and $\lambda>0$. It is apparent that $X$ does not have a finite mean for $\gamma<1$. On the basis of \eqref{LapPS} and \eqref{mom1} respectively, for $a>0$ we have
	\begin{gather*}
	L_X(a)=e^{-\lambda a^\gamma},\qquad {\rm and}\qquad E(Y_a)=E(Xe^{-aX})=-{{\gamma}\over{a}}L_X(a)\log(L_X(a)),
	\end{gather*}
	from which we respectively obtain 
	\begin{gather*}
	\gamma=-{{am_{1,a}}\over{L_X(a)\log(L_X(a))}},\qquad {\rm and}\qquad \lambda=-a^{-\gamma}\log(L_X(a)).
	\end{gather*}
	It follows that $$a_*=\lambda^{-1/\gamma}$$ and
	$(\gamma,\lambda)=(h_1(m_{1,a_*},a_*),h_2(m_{1,a_*},a_*))
	=(em_{1,a_*}a_*,a_*^{-\gamma}).$
	Therefore, by using the results provided in Section \ref{SecCenso}, the estimators of $(\gamma,\lambda)$ are given by
	\begin{gather}\label{estpos}
	(\widehat\gamma,\widehat\lambda)=(e\widehat m_{1,A}\,A,\,A^{-\widehat\gamma}).
	\end{gather}
	Properties of estimators \eqref{estpos} are obtained in the following Proposition.

	\begin{proposition}\label{ProPS}
	The estimators $(\widehat\gamma,\widehat\lambda)$ converge almost surely to $(\gamma,\lambda)$ and the random vector $\sqrt n(\widehat\gamma-\gamma,\widehat\lambda-\lambda)$
	converges in distribution to the bivariate normal law $\mathrm{N}_2(0,\Sigma)$ as $n\rightarrow\infty$. Here, $\Sigma$ is the variance-covariance matrix of the random vector
	\begin{gather*}
		(S_1,S_2)=(a_*Xe^{1-a_*X},-\lambda e^{1-a_*X}\{a_*X\log(a_*)+1\})
	\end{gather*}
	A consistent estimator of $\Sigma$  is given by the sample variance-covariance matrix of the random vectors $(\Gamma_i,\Lambda_i)$, with $i=1,\ldots,n$, where 
	\begin{gather*}
		(\Gamma_i,\Lambda_i)=(AX_ie^{1-AX_i},-\widehat\lambda e^{1-AX_i}\{ X_iA\log(A)+1\}). 
	\end{gather*}
	\end{proposition}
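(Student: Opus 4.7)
The plan is to specialise Proposition \ref{ProGen} to $k=1$ and then apply the Delta Method to the map $H(m,a)=(ema,\,a^{-ema})$, which sends $(\widehat m_{1,A},A)$ to $(\widehat\gamma,\widehat\lambda)$. The almost-sure convergence $(\widehat\gamma,\widehat\lambda)\to(\gamma,\lambda)$ is then immediate from the almost-sure limits $(\widehat m_{1,A},A)\to(m_{1,a_*},a_*)$ provided by Proposition \ref{ProGen} together with the continuous mapping theorem, using the two identities $em_{1,a_*}a_*=\gamma$ and $a_*^{-\gamma}=\lambda$, which follow from the explicit form $L_X(s)=e^{-\lambda s^\gamma}$ evaluated at $a_*=\lambda^{-1/\gamma}$.

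For the joint asymptotic normality I would combine the bivariate CLT $\sqrt n(\widehat m_{1,A}-m_{1,a_*},A-a_*)\Rightarrow\mathrm N_2(0,\Sigma_0)$ from Proposition \ref{ProGen}, with $\Sigma_0$ the covariance matrix of $(V_1,W)$, with the first-order expansions
\[
\widehat\gamma-\gamma\approx ea_*(\widehat m_{1,A}-m_{1,a_*})+(\gamma/a_*)(A-a_*),
\]
\[
\widehat\lambda-\lambda\approx -\lambda ea_*\log(a_*)(\widehat m_{1,A}-m_{1,a_*})-\lambda(\gamma/a_*)(1+\log a_*)(A-a_*),
\]
which come directly from the Jacobian of $H$ at $(m_{1,a_*},a_*)$. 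By the Delta Method, the limiting covariance is then that of the random vector $(ea_*V_1+(\gamma/a_*)W,\,-\lambda ea_*\log(a_*)V_1-\lambda(\gamma/a_*)(1+\log a_*)W)$.

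The core of the argument is to verify that these two linear combinations coincide \emph{identically}, not merely up to additive constants, with $(S_1,S_2)$. Differentiating $L_X(s)=e^{-\lambda s^\gamma}$ once and twice, then using $\lambda a_*^\gamma=1$, produces the explicit identities $m_{1,a_*}=\gamma/(ea_*)$ and $m_{2,a_*}/m_{1,a_*}=1/a_*$. Substituting these into the first combination collapses it to $e^{-a_*X}[ea_*X-e+e]=a_*Xe^{1-a_*X}=S_1$; plugging them into the second and expanding, the three $\log a_*$ contributions telescope and leave exactly $-\lambda e^{1-a_*X}(a_*X\log(a_*)+1)=S_2$.

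Finally, for the consistency of $\widehat\Sigma$, I would regard $\Gamma_i$ and $\Lambda_i$ as continuous, bounded functions of $X_i$ evaluated at $(A,\widehat\gamma,\widehat\lambda)$, which converge almost surely to $(a_*,\gamma,\lambda)$. A uniform law of large numbers over a compact neighbourhood of this limit, exploiting that $x\mapsto axe^{-ax}$ is uniformly bounded on $[0,\infty[$ for $a$ in that neighbourhood, yields the almost-sure convergence of the sample variance-covariance matrix of $(\Gamma_i,\Lambda_i)$ to $\Sigma$. The only real obstacle I expect is the algebraic collapse in the third paragraph: the simplification to $(S_1,S_2)$ is delicate because it hinges both on the special structure of the positive stable Laplace transform and on the carefully chosen $a_*=\lambda^{-1/\gamma}$, without which the neat form of $\Sigma$ would not emerge.
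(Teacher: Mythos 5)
Your proof is correct and follows essentially the same route as the paper: specialise Proposition \ref{ProGen} to $k=1$, apply the Delta Method to the map $(m_1,a)\mapsto(em_1a,\,a^{-em_1a})$, and collapse the resulting linear combinations of $V_1$ and $W$ to $(S_1,S_2)$ via the identities $m_{1,a_*}=\gamma/(ea_*)$ and $m_{2,a_*}=m_{1,a_*}/a_*$. Incidentally, your Jacobian carries the correct signs in the second row, whereas the paper's displayed $J$ has a sign slip in $J_{21}$ and in the first term of $J_{22}$, although its final expression for $(S_1,S_2)$ agrees with yours.
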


	\begin{proof}
	 Since $\gamma=h_1(m_{1,a_*},a_*)$ and $\lambda=h_2(m_{1,a_*},a_*)$, we have the Jacobian matrix of $(h_1,h_2)$ given by
	\begin{equation*}
		J=\begin{pmatrix} ea_*&em_1\\ea_*\lambda\log (a_*) & e\lambda\log (a_*)m_{1,a_*}-a_*^{-1}\gamma\lambda \end{pmatrix}.
	\end{equation*}
	On the basis of Proposition \ref{ProGen} and the Delta Method, $\sqrt n(\widehat\gamma-\gamma,\widehat\lambda-\lambda)$ converges in distribution to $\mathrm{N}_2(0,\Sigma)$, where $\Sigma$ is the variance-covariance matrix of the random vector 
	\begin{gather*}
		(V_1,W)J^{\top}=(e\{a_*V_1+m_{1,a_*}W\},e\lambda\log(a_*)\{a_*V_1+m_{1,a_*}W\}-a_*^{-1}\gamma\lambda W),
	\end{gather*}
	and $V_1$ and $W$ are defined in Proposition \ref{ProGen}. The previous random vector is actually equal to $(S_1,S_2)$, since $m_{1,a_*}=a_*m_{2,a_*}$. Finally, the Proposition is proven, since $A$ converges almost surely to $a_*$ and, by means of the Law of Large Numbers, the sample variance-covariance matrix of the random vectors $(\Gamma_i,\Lambda_i)$ converges almost surely to $\Sigma$. 
	\end{proof}

	Since $m_{1,a_*}=a_*m_{2,a_*}$, by using \eqref{syss}, we have $\widehat L_X^{(2)}(A)\sim \widehat m_{1,A}/A$ as $n\rightarrow\infty$ and expression \eqref{test1} suggests the equivalent GOF test statistic
	\begin{gather}\label{postest}
	T_n=\sqrt n(A\widehat m_{2,A}-\widehat m_{1,A}).
	\end{gather}
	The following Proposition provides the large-sample distribution of \eqref{postest}.

	\begin{proposition}\label{ProgofPS}
	The test statistic $T_n$ in \eqref{postest} converges in distribution to $\mathrm{N}(0,\sigma^2)$ as $n\rightarrow\infty$, where 
	\begin{gather*}
		\sigma^2={\rm Var}\Big(e^{-a_*X}\Big\{{{a_*m_{3,a_*}-2m_{2,a_*}}\over{m_{1,a_*}}}+X(1-a_*X)\Big\}\Big).
	\end{gather*} 
	$\sigma^2$ can be estimated by
	\begin{gather*}
		\widehat\sigma^2={{1}\over{n-1}}\sum_{i=1}^n (Z_i-\overline{Z})^2,
	\end{gather*}
	where
	\begin{gather*}
		Z_i=e^{-AX_i}\Big\{{{A\widehat m_{3,A}-2\widehat m_{2,A}}
			\over{\widehat m_{1,A}}}+
		X_i(1-AX_i)\Big\},\qquad {\rm and}\qquad
		\overline Z={{1}\over{n}}\sum_{i=1}^n Z_i.
	\end{gather*}
	\end{proposition}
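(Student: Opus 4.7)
The plan is to exhibit $T_n$ as $\sqrt n$ times the increment of a smooth map applied to $(\widehat m_{1,A},\widehat m_{2,A},A)$ and then to appeal to Proposition~\ref{ProGen} together with the Delta Method. The first step is to verify the identity $a_*m_{2,a_*}=m_{1,a_*}$ under the positive stable law: differentiating $L_X(a)=e^{-\lambda a^\gamma}$ twice and using $\lambda a_*^\gamma=1$ gives $m_{1,a_*}=\gamma e^{-1}/a_*$ and $m_{2,a_*}=\gamma e^{-1}/a_*^2$. Setting $\psi(u,v,b)=bv-u$, this identity yields $\psi(m_{1,a_*},m_{2,a_*},a_*)=0$, so that $T_n=\sqrt n\{\psi(\widehat m_{1,A},\widehat m_{2,A},A)-\psi(m_{1,a_*},m_{2,a_*},a_*)\}$; in particular, the statistic is properly centered at the null.

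The second step is the Delta Method. Proposition~\ref{ProGen} with $k=2$ provides the joint limit
\[
\sqrt n(\widehat m_{1,A}-m_{1,a_*},\widehat m_{2,A}-m_{2,a_*},A-a_*)\Rightarrow\mathrm N_3(0,\Sigma),
\]
where $\Sigma$ is the covariance matrix of $(V_1,V_2,W)$ in the notation of that proposition. Since $\nabla\psi(m_{1,a_*},m_{2,a_*},a_*)=(-1,a_*,m_{2,a_*})$, the Delta Method produces $T_n\Rightarrow\mathrm N(0,\sigma^2)$ with $\sigma^2=\mathrm{Var}(-V_1+a_*V_2+m_{2,a_*}W)$. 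A short algebraic simplification, after factoring $e^{-a_*X}$ and collecting terms, yields
\[
-V_1+a_*V_2+m_{2,a_*}W=-e^{-a_*X}\Big\{X(1-a_*X)+\frac{a_*m_{3,a_*}-2m_{2,a_*}}{m_{1,a_*}}\Big\},
\]
whose variance is exactly the $\sigma^2$ displayed in the statement.

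For the consistency of $\widehat\sigma^2$, I would invoke Proposition~\ref{ProGen} with $k=3$ (the proof is identical and only requires integrability of $X^3 e^{-aX}$, which holds for any $a>0$) to obtain $\widehat m_{r,A}\to m_{r,a_*}$ a.s.\ for $r=1,2,3$ together with $A\to a_*$ a.s. The random variables $Z_i$ are then the plug-in analogues of the centered variable whose variance defines $\sigma^2$, with plug-in parameters converging almost surely to their true values. Combining the Law of Large Numbers applied to $n^{-1}\sum_i Z_i^2$ and to $\overline Z$ with the Continuous Mapping Theorem delivers $\widehat\sigma^2\to\sigma^2$ almost surely.

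The main obstacle is essentially computational: verifying the centering identity $a_*m_{2,a_*}=m_{1,a_*}$, which is particular to the positive stable family and selects the very form of $T_n$, and then performing the bookkeeping in the linear combination $-V_1+a_*V_2+m_{2,a_*}W$ so that the limiting variance assumes the exact expression stated.
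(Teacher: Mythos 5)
Your proposal is correct and follows essentially the same route as the paper: write $T_n$ as $\sqrt n$ times the increment of $(u,v,b)\mapsto bv-u$ (centered via the identity $m_{1,a_*}=a_*m_{2,a_*}$, which the paper establishes earlier in Section~\ref{SecPS}), apply Proposition~\ref{ProGen} and the Delta Method with gradient $(-1,a_*,m_{2,a_*})$, and simplify $-V_1+a_*V_2+m_{2,a_*}W$ to the stated expression, with consistency of $\widehat\sigma^2$ from almost sure convergence and the Law of Large Numbers. Your explicit verification of the centering identity and the remark that $k=3$ is needed for the variance estimator are correct, welcome details that the paper leaves implicit.
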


	\begin{proof}
	Since
	\begin{gather*}
		T_n=\sqrt n\{(A\widehat m_{2,A}-\widehat m_{1,A})-(am_{2,a}-m_{1,a})\},
	\end{gather*}
	on the basis of Proposition \ref{ProGen} and the Delta Method, $T_n$ converges in distribution to the normal law $\mathrm{N}(0,\sigma^2)$, where $\sigma^2$ is the variance of the r.v.\ $(-V_1+aV_2+m_{2,a}W)$, where $V_1$, $V_2$ and $W$ are defined in Proposition \ref{ProGen}. It holds
	\begin{gather*}
		-V_1+a_*V_2+m_{2,a_*}W=e^{-a_*X}\Big\{-X(1-a_*X)-{{a_*m_{3,a_*}}\over{m_{1,a_*}}}+2{{m_{2,a_*}}\over{m_{1,a_*}}}\Big\},
	\end{gather*}
	and the Proposition is proven, since $A$ converges almost surely to $a_*$ and, by means of the Law of Large Numbers, the sample variance $\widehat\sigma^2$ converges almost surely to $\sigma^2$. 
	\end{proof}

	\section{Inference for the Tweedie distribution}\label{SecTW}

	In this section we assume that $X$ is distributed with the Tweedie law. The corresponding Laplace transform \eqref{LapTW} is a special case of the class \eqref{class2} by setting
	\begin{gather*}
	g(x)=e^{-\mathrm{sgn}(\gamma)\lambda x}.
	\end{gather*}
	As to the Tweedie family, the parameter space is more complex with respect to the positive stable case. Indeed, the parameters $(\gamma,\lambda,\theta)$ are defined on the union of $]0,1]\times\,]0,\infty[\,\times[0,\infty[$ and $]-\infty,0[\times\,]0,\infty[\,\times\,]0,\infty[$. On one hand, when the parameters are elements of the first set, the Tweedie distribution is actually an exponentially-tilted positive stable law. On the other hand, when the parameters are elements of the second set, the Tweedie distribution is a compound Poisson law (see \citealp{AAL}). 
	
Thus, the extra parameter $\theta$ substantially extends the flexibility of the Tweedie model with respect to the positive stable model. For further properties of the Tweedie law, see \cite{BARCE}, \cite{BARCEPE}, and references therein.

 If $\lambda\theta^\gamma>-{\rm sgn}(\gamma)$, it holds
	$$a_*=\Big\{\frac{1}{{\rm sgn}(\gamma)\lambda}+\theta^\gamma\Big\}^{1/\gamma}-\theta.$$  Note that $P(X=0)<1/e$ $\iff$ $ \lambda\theta^\gamma>-{\rm sgn}(\gamma)$.  From \eqref{mom1}, \eqref{mom2} and \eqref{mom3} the following expressions arise 
	\begin{gather*}
	E(Y_{a_*})=|\gamma|\lambda L_X(a_*)(\theta+a_*)^{\gamma-1},\qquad	E(Y^2_{a_*}) ={{E(Y_{a_*})^2}\over{L_X(a_*)}}+E(Y_{a_*}){{1-\gamma}\over{\theta+a_*}},
	\end{gather*}
	\begin{gather*}
		E(Y^3_{a_*}) ={{E(Y_{a_*})^3}\over{L^2_X(a_*)}}+{{E(Y_{a_*})(1-\gamma)}\over{\theta+a_*}}\Big\{{{3E(Y_{a_*})}\over{L_X(a_*)}}+{{2-\gamma}\over{\theta+a_*}}\Big\}.
	\end{gather*}
	
\medskip\noindent Moreover, 
$$\gamma= h_1(m_{1,a_*}, m_{2,a_*}, m_{3,a_*}, a_*)= 1- \Big({{m_{2,a_*}}\over{m_{1,a_*}^2}}-e\Big)\phi (m_{1,a_*}, m_{2,a_*},m_{3,a_*}),$$
 $$	\lambda=h_2(m_{1,a_*},m_{2,a_*},m_{3,a_*},a_*)=e m_{1,a_*}|\gamma|^{-1}(\theta+a_*)^{1-\gamma},$$
 and $$\theta = h_3(m_{1,a_*},m_{2,a_*},m_{3,a_*},a_*)=-a_*+m_{1,a_*}^{-1}\phi(m_{1,a_*},m_{2,a_*},m_{3,a_*}),$$ 
 where $$\phi(m_{1,a_*},m_{2,a_*},m_{3,a_*})=\Big({{m_{3,a_*}-e^{2}m_{1,a_*}^3}\over{m_{1,a_*}m_{2,a_*}-e m_{1,a_*}^3}}-2e-{{m_{2,a_*}}\over{m_{1,a_*}^2}}\Big)^{-1}.$$

\smallskip\noindent Therefore, on the basis of the results given in Section \ref{SecCenso}, the estimators $(\widehat\gamma,\widehat\lambda, \widehat\theta)$ for the parameters $(\gamma,\lambda,\theta)$ are given by 
\begin{align}
\label{tweediestime1}
\widehat\gamma&=1-\Big({{\widehat m_{2,A}}\over{\widehat m_{1,A}^2}}-e\Big)\phi(\widehat m_{1,A},\widehat m_{2,A},\widehat m_{3,A}),\\
\label{tweediestime2}
\widehat\lambda&=e\widehat m_{1,A}|\widehat\gamma|^{-1}(\widehat\theta+A)^{1-\widehat\gamma},\\
\label{tweediestime3}
\widehat\theta&=-A+\widehat m_{1,A}^{-1}\, \phi(\widehat m_{1,A},\widehat m_{2,A},\widehat m_{3,A}).
\end{align}

\medskip The large-sample properties of the previous estimators are given in the following Proposition.
\begin{proposition}\label{ProTW}
	If $\lambda\theta^\gamma>-{\rm sgn}(\gamma)$, the estimators $(\widehat\gamma,\widehat\lambda,\widehat\theta)$ converges almost surely to $(\gamma,\lambda,\theta)$ and $\sqrt n(\widehat\gamma-\gamma,\widehat\lambda-\lambda,\widehat\theta-\theta)$ converges in distribution to the trivariate normal law $\mathrm{N}_3(0,\Sigma)$ as $n\rightarrow\infty$. Here, $\Sigma$ is the variance-covariance matrix of the random vector
	\begin{gather*}
		(S_1,S_2,S_3)=e^{-a_*X}\Big(X-{{m_{2,a*}}\over{m_{1,a_*}}},X^2-{{m_{3,a_*}}\over{m_{1,a_*}}},X^3-{{m_{4,a_*}}\over{m_{1,a_*}}},{{1}\over{m_{1,a_*}}}\Big)J^{\top}
	\end{gather*}
	and $J$ represents the Jacobian matrix of order $3\times4$ of the map $h=(h_1,h_2,h_3)$ evaluated at $(m_{1,a_*},m_{2,a_*},m_{3,a_*},a_*)$. In particular, an estimator of $\Sigma$  is given by the sample variance-covariance matrix of $(\Gamma_i,\Lambda_i,\Theta_i)$ with $i=1,\ldots,n$, where
	\begin{gather*}
		(\Gamma_i,\Lambda_i,\Theta_i)=e^{-AX_i}\Big(X_i-{{\widehat m_{2,A}}\over{\widehat m_{1,A}}},X^2_i-{{\widehat m_{3,A}}\over{\widehat m_{1,A}}},X^3_i-{{\widehat m_{4,A}}\over{\widehat m_{1,A}}},\,{{1}\over{\widehat m_{1,A}}}\Big){\widehat J}^{\top}
	\end{gather*}
	and ${\widehat J}=J(\widehat m_{1,A},\widehat m_{2,A},\widehat m_{3,A},A)$.
	\end{proposition}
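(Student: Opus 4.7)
The plan is to express the estimators as $(\widehat\gamma,\widehat\lambda,\widehat\theta) = h(\widehat m_{1,A},\widehat m_{2,A},\widehat m_{3,A},A)$, with $h = (h_1,h_2,h_3)$ the explicit map defined just before the statement, and then to invoke Proposition \ref{ProGen} (with $k = 3$) together with the Continuous Mapping Theorem and the Delta Method, in the same spirit as in the proof of Proposition \ref{ProPS} but now with one extra parameter and one additional moment equation to carry along.

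Proposition \ref{ProGen} applied with $k = 3$ provides the almost sure convergence of $(\widehat m_{1,A},\widehat m_{2,A},\widehat m_{3,A},A)$ to $(m_{1,a_*},m_{2,a_*},m_{3,a_*},a_*)$ together with the joint central limit theorem
\[
\sqrt n\,(\widehat m_{1,A}-m_{1,a_*},\widehat m_{2,A}-m_{2,a_*},\widehat m_{3,A}-m_{3,a_*},A-a_*)\ \Rightarrow\ \mathrm{N}_4(0,\Sigma_0),
\]
where $\Sigma_0$ is the covariance matrix of $(V_1,V_2,V_3,W)$ with $V_r = e^{-a_*X}(X^r - m_{r+1,a_*}/m_{1,a_*})$ and $W = e^{-a_*X}/m_{1,a_*}$. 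Once $h$ is shown to be of class $C^1$ at $(m_{1,a_*},m_{2,a_*},m_{3,a_*},a_*)$, the Continuous Mapping Theorem yields the almost sure convergence of $(\widehat\gamma,\widehat\lambda,\widehat\theta)$ to $(\gamma,\lambda,\theta)$, and the Delta Method gives the limiting law $\mathrm{N}_3(0, J\Sigma_0 J^\top)$. By linearity, $J\Sigma_0 J^\top$ equals the covariance of $(V_1,V_2,V_3,W)J^\top$, and this row vector coincides with $(S_1,S_2,S_3)$ as displayed in the statement. Consistency of $\widehat\Sigma$ then follows from the Law of Large Numbers applied to $(\Gamma_i,\Lambda_i,\Theta_i)$, together with the almost sure convergence of $\widehat m_{4,A}$ to $m_{4,a_*}$ and the continuity of $J$ at the true point.

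The main obstacle is the algebraic verification: one must check both that $h(m_{1,a_*},m_{2,a_*},m_{3,a_*},a_*) = (\gamma,\lambda,\theta)$ and that $h$ is differentiable there. The key step is to exploit $L_X(a_*) = e^{-1}$, which holds precisely under the standing assumption $\lambda\theta^\gamma > -\mathrm{sgn}(\gamma)$. Specializing the closed-form expressions for $E(Y_{a_*}^r)$ from \eqref{mom1}, \eqref{mom2} and \eqref{mom3} yields the identities
\[
m_{2,a_*}/m_{1,a_*}^2 - e = (1-\gamma)/\bigl(m_{1,a_*}(\theta + a_*)\bigr),\qquad m_{1,a_*}m_{2,a_*} - e\,m_{1,a_*}^3 = m_{1,a_*}^2(1-\gamma)/(\theta + a_*),
\]
the second being nonzero provided $\gamma \neq 1$. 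A parallel manipulation reduces the outer bracket inside $\phi$ to $1/\bigl(m_{1,a_*}(\theta + a_*)\bigr)$, so that $\phi(m_{1,a_*},m_{2,a_*},m_{3,a_*}) = m_{1,a_*}(\theta + a_*)$, which is both well defined and smooth at the true point. From this, the three claimed formulas for $\gamma$, $\theta$, $\lambda$ are recovered by direct substitution and back-substitution. The boundary case $\gamma = 1$, at which the Tweedie law degenerates to the point mass at $\lambda$ and $\theta$ is not identifiable, lies outside the regime covered by the proposition; once this case is excluded, the rest of the argument is a routine instance of the machinery already exercised in Proposition \ref{ProPS}.
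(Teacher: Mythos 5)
Your proposal is correct and follows essentially the same route as the paper: Proposition \ref{ProGen} with $k=3$, the Delta Method applied to $h=(h_1,h_2,h_3)$, and the Law of Large Numbers for the consistency of the variance estimator. The only difference is that you additionally carry out the algebraic verification that $\phi(m_{1,a_*},m_{2,a_*},m_{3,a_*})=m_{1,a_*}(\theta+a_*)$ and that $h$ recovers $(\gamma,\lambda,\theta)$ at the true point (and flag the degenerate case $\gamma=1$), details which the paper takes for granted since the formulas for $h_1,h_2,h_3$ are stated without derivation before the proposition.
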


	\begin{proof}
	By means of Proposition \ref{ProGen} and the Delta Method applied to $h$, we have the convergence in distribution of $\sqrt n(\widehat\gamma-\gamma,\widehat\lambda-\lambda,\widehat\theta-\theta)$ to the trivariate normal law $\mathrm{N}_3(0,\Sigma)$, where $\Sigma$ is the variance-covariance matrix of the random vector $(S_1,S_2,S_3)$. Moreover, $(\widehat\gamma,\widehat\lambda,\widehat\theta)$ is a consistent estimator of $(\gamma,\lambda,\theta)$ by means of the Law of Large Numbers and owing to the continuity of $h$. Since $A$ converges almost surely to $a_*$, the sample variance-covariance matrix of $(\Gamma_i,\Lambda_i,\Theta_i)$ converges almost surely to $\Sigma$ and the Proposition is proven. 
	\end{proof}
On the basis of \eqref{test2}, let us consider the test statistic
	\begin{gather*}
	T^\prime_n=\sqrt n[{\rm sgn}(\widehat\gamma)\widehat\lambda\{(\widehat\theta+A)^
	{\widehat\gamma}-
	\widehat\theta^{\,\widehat\gamma}\}-1].
	\end{gather*}
	The previous test statistic, suitably normalized, has the same asymptotic behaviour of the normalized version of
	\begin{gather}\label{twtest}
	\widetilde T^\prime_n=\sqrt n\Big\{1- \Big({{\widehat\theta}\over{\widehat\theta+A}}\Big)^
	{\widehat\gamma}-{{\widehat\gamma}
		\over{e\widehat m_{1,A}(\widehat\theta+A)}}
	\Big\}.
	\end{gather}
	The large-sample properties of the test statistic $\widetilde T^\prime_n$ in \eqref{twtest} are derived in the following Proposition.

	\begin{proposition}\label{ProgofTW}
	If $\lambda\theta^\gamma>-{\rm sgn}(\gamma)$, the test statistic $\widetilde T^\prime_n$ in \eqref{twtest} converges in distribution to the normal law $\mathrm{N}(0,\sigma^2)$ as $n\rightarrow\infty$, where
	\begin{gather*}
		\sigma^2={\rm Var}(\beta_1V_1+\beta_2V_2+\beta_3V_3+\beta_4W)
	\end{gather*}
	and $V_1$, $V_2$, $V_3$ and $W$ are defined in Proposition \ref{ProGen}. Moreover, $(\beta_1,\beta_2,\beta_3,\beta_4)$ is the gradient of the map 
	$$
	(m_1,m_2,m_3,a)\mapsto -(1-am_{1}\psi)^
		{\phi}-e^{-1}\Big(\psi-{{m_2}\over{m_1^2}}\Big)
	$$
	evaluated at $(m_{1,a_*},m_{2,a_*},m_{3,a_*},a_*)$, where
	\begin{gather*}
		\psi=\psi(m_1,m_2,m_3)={{m_{3}-e^{2}m_{1}^3}\over{m_{1}m_{2}-em_{1}^3}}-2e-{{ m_{2}}\over{m_{1}^2}}, \quad \phi=\phi(m_1,m_2,m_3)=1-{{1}\over{\psi}}\Big({{m_{2}}\over{ m_{1}^2}}-e\Big).
	\end{gather*}
	In particular, 
	\begin{gather*}
		\widehat\sigma^2={{1}\over{n-1}}\sum_{i=1}^n (Z_i-\overline{Z})^2
	\end{gather*}
	is a consistent estimator for $\sigma^2$, where $		Z_i=\widehat\beta_1 \widehat V_{1,i}+\widehat\beta_2\widehat V_{2,i}+\widehat\beta_3\widehat V_{3,i}+\widehat\beta_4\widehat W_{i}$ and
	$\overline Z={{1}\over{n}}\sum_{i=1}^n Z_i,$ while $\widehat\beta_r$ denotes $\beta_r$ evaluated at $(\widehat m_{1,A},\widehat m_{2,A},\widehat m_{3,A},A)$, with $r=1,\ldots,4$, and $\widehat V_{1,i}$, $\widehat V_{2,i}$, $\widehat V_{3,i}$ and $\widehat W_{i}$ are defined in Proposition \ref{ProGen}.
	\end{proposition}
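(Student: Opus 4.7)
The plan is to recognize $\widetilde T'_n$ as $\sqrt n\{F(\widehat m_{1,A},\widehat m_{2,A},\widehat m_{3,A},A)-F(m_{1,a_*},m_{2,a_*},m_{3,a_*},a_*)\}$, where $F(m_1,m_2,m_3,a)=-(1-am_1\psi)^{\phi}-e^{-1}(\psi-m_2/m_1^2)$ is the map introduced in the statement, and then to combine the joint asymptotic normality of $(\widehat m_{1,A},\widehat m_{2,A},\widehat m_{3,A},A)$ supplied by Proposition \ref{ProGen} (taken with $k=3$) with the multivariate Delta Method. Consistency of $\widehat\sigma^2$ will then follow from the almost-sure convergence of the sample vector to its population analogue together with the continuity of the gradient of $F$.

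The first step is algebraic. From \eqref{tweediestime1}--\eqref{tweediestime3}, the estimators satisfy $\widehat\gamma=\phi(\widehat m_{1,A},\widehat m_{2,A},\widehat m_{3,A})$ and $\widehat\theta+A=1/\{\widehat m_{1,A}\,\psi(\widehat m_{1,A},\widehat m_{2,A},\widehat m_{3,A})\}$, so that $\widehat\theta/(\widehat\theta+A)=1-A\widehat m_{1,A}\psi$. Using the identity $\phi\psi=\psi-m_2/m_1^2+e$ (immediate from the definition of $\phi$), one obtains $\widehat\gamma/\{e\widehat m_{1,A}(\widehat\theta+A)\}=1+e^{-1}(\psi-\widehat m_{2,A}/\widehat m_{1,A}^2)$, and the defining expression for $\widetilde T'_n$ collapses to $\sqrt n F(\widehat m_{1,A},\widehat m_{2,A},\widehat m_{3,A},A)$. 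To verify the centering, the hypothesis $\lambda\theta^\gamma>-{\rm sgn}(\gamma)$ gives $L_X(a_*)=e^{-1}$ and $(\theta+a_*)^\gamma=\theta^\gamma+{\rm sgn}(\gamma)/\lambda$; combined with Remark \ref{rem:mom} these identities yield $\psi(m_{1,a_*},m_{2,a_*},m_{3,a_*})=1/\{m_{1,a_*}(\theta+a_*)\}$ and $\phi(m_{1,a_*},m_{2,a_*},m_{3,a_*})=\gamma$, so $F$ at the true values reduces to $-(\theta/(\theta+a_*))^\gamma+1-\gamma\psi/e$; writing both remaining terms over $1+{\rm sgn}(\gamma)\lambda\theta^\gamma$ shows this vanishes.

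With the rewriting in place, Proposition \ref{ProGen} yields joint asymptotic normality of $\sqrt n(\widehat m_{1,A}-m_{1,a_*},\widehat m_{2,A}-m_{2,a_*},\widehat m_{3,A}-m_{3,a_*},A-a_*)$ with variance-covariance matrix equal to that of $(V_1,V_2,V_3,W)$. Since $\psi$ and $m_{1,a_*}$ do not vanish at the true parameters, $F$ is differentiable there, and the Delta Method delivers $\widetilde T'_n\to \mathrm{N}(0,\sigma^2)$ with $\sigma^2=\mathrm{Var}(\beta_1V_1+\beta_2V_2+\beta_3V_3+\beta_4W)$ and $(\beta_1,\ldots,\beta_4)=\nabla F(m_{1,a_*},m_{2,a_*},m_{3,a_*},a_*)$. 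For the variance estimator, Proposition \ref{ProGen} provides $(\widehat m_{r,A},A)\to(m_{r,a_*},a_*)$ almost surely, so continuity of the partial derivatives of $F$ gives $\widehat\beta_r\to\beta_r$ a.s.; coupled with the already-established almost-sure convergence of the sample variance-covariance matrix of $(\widehat V_{1,i},\widehat V_{2,i},\widehat V_{3,i},\widehat W_i)$ to that of $(V_1,V_2,V_3,W)$, this yields $\widehat\sigma^2\to\sigma^2$ almost surely.

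The principal obstacle is the algebraic bookkeeping in the first step: rewriting $\widetilde T'_n$ demands careful handling of the nested functions $\phi$ and $\psi$ (note that the symbol $\phi$ has two different meanings between Section \ref{SecTW} and the proposition statement), and checking $F(m_{1,a_*},m_{2,a_*},m_{3,a_*},a_*)=0$ requires patient use of the defining identity $(\theta+a_*)^\gamma=\theta^\gamma+{\rm sgn}(\gamma)/\lambda$ to trigger the cancellation. Once this reduction to a standard Delta-Method form is accomplished, the remaining asymptotic and consistency arguments are routine.
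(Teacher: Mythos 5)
Your proposal is correct and follows essentially the same route as the paper: both rewrite $\widetilde T^\prime_n$ as $\sqrt n$ times the map $F(m_1,m_2,m_3,a)=-(1-am_1\psi)^{\phi}-e^{-1}(\psi-m_2/m_1^2)$ evaluated at $(\widehat m_{1,A},\widehat m_{2,A},\widehat m_{3,A},A)$, verify that $F$ vanishes at the true values, and then invoke Proposition \ref{ProGen} with the Delta Method, obtaining consistency of $\widehat\sigma^2$ from almost-sure convergence and the Law of Large Numbers. The only difference is that you spell out the algebra (via $\widehat\gamma=\widehat\phi$, $\widehat\theta+A=1/(\widehat m_{1,A}\widehat\psi)$, and the identities $\psi=1/\{m_{1,a_*}(\theta+a_*)\}$, $\phi=\gamma$ at the true parameters) that the paper's terse proof merely asserts through the centering equality $(1-a_*m_{1,a_*}\psi)^{\phi}=-e^{-1}(\psi-m_{2,a_*}/m_{1,a_*}^2)$.
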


	\begin{proof}
	Since
	\begin{gather*}
		\widetilde T^\prime_n=\sqrt n\Big\{-(1-A\widehat m_{1,A}\widehat\psi)^
		{\widehat\phi}-e^{-1}\Big(\widehat\psi-{{\widehat m_{2,A}}\over{\widehat m_{1,A}^2}}\Big)\Big\}
	\end{gather*}
	with 
	\begin{gather*}
		\widehat\psi={{\widehat m_{3,A}-e^{2}\widehat m_{1,A}^3}\over{\widehat m_{1,A}\widehat m_{2,A}-e\widehat m_{1,A}^3}}-2e-{{\widehat m_{2,A}}\over{\widehat m_{1,A}^2}}, \ \quad \ \widehat\phi=1-{{1}\over{\widehat\psi}}\Big({{\widehat m_{2,A}}\over{\widehat m_{1,A}^2}}-e\Big),
	\end{gather*}
	on the basis of Proposition \ref{ProGen} and of the Delta Method, by considering the equality
	\begin{gather*}
		(1-a_*m_{1,a_*}\psi)^{\phi}=-e^{-1}\Big(\psi-{{ m_{2,a_*}}\over{ m_{1,a_*}^2}}\Big),
	\end{gather*}
	$\widetilde T^\prime_n$ converges in distribution to the normal law $\mathrm{N}(0,\sigma^2)$. The result follows from Proposition \ref{ProTW} and the Law of Large Numbers.
	\end{proof}

\section{Simulation results}\label{SecSimu}
By means of a Monte Carlo simulation study, we assess the finite sample performance of the estimators and GOF procedures proposed in Section \ref{SecPS} for the positive stable PS$(\gamma,\lambda)$ distribution and in Section \ref{SecTW} for the Tweedie TW$(\gamma,\lambda,\theta)$ distribution. 
For the Tweedie distribution, when it is  a compound Poisson law, that is when $\gamma < 0$, it can be useful to determine the parameters values characterizing the distributions from which data are generated using the following notation
\begin{gather}
	\label{alternative_param}
	{\rm TW}(\gamma,\lambda, \theta)= {\rm TW}_0(|\gamma|\lambda \theta^{\gamma-1},\frac{1-\gamma}{\theta},\exp(-\lambda \theta^\gamma))={\rm TW}_0(\mu,w,p).
\end{gather}
 It should be noted that $\mu$ and  $p$ have an immediate interpretation as they represent the mean and the probability of observing a zero value,  respectively. Then, if $\gamma <0$, the model parameters are $(\gamma,\lambda, \theta)$, but their values are derived from the values selected for $\mu$, $w$ and $p$.
The random variate generation is carried out by considering the stochastic representations discussed in \cite{DEJAM} and \cite{BARCEPE}.

As to the positive stable distribution, first we consider four choices of the parameter values and, for each choice, $3500$ samples of size $n=100,200,300$ are independently generated. For each sample, parameter estimates are obtained by means of \eqref{estpos}. From the Monte Carlo distributions of the estimates, the Relative Root Mean Squared Error (RRMSE) of $\widehat{\gamma}$ and $\widehat{\lambda}$ is computed as the root mean square error divided by the true parameter value.
Analogously, two parameter choices are considered for each parametrization of the Tweedie distribution and, for each choice, $3500$ samples of size $n=500,1000, 1500$ are independently generated. The different sample sizes considered for the two distributions are related to the different number of parameters.
For each sample the estimates of $\gamma$, $\lambda$ and $\theta$ are obtained by means of \eqref{tweediestime1}, \eqref{tweediestime2} and \eqref{tweediestime3}, and the RRMSE values are computed from the corresponding Monte Carlo distributions.
Percentage values of RRMSEs are reported in Table \ref{tab:rrmse_ps} and in Table \ref{tab:rrmse_tw} for the positive stable law and for the Tweedie law, respectively. In these Tables and in all subsequent ones, subscripts denote the sample size.
The results illustrate the consistency of the estimators for all the considered parameter values. 

Moreover, for the positive stable distribution, also confidence interval estimates at confidence level $1-\alpha=0.95$ are obtained using the quantiles of the standard normal distribution and results in Section \ref{SecPS}. The empirical coverage is computed as the proportion of  times the true value is in the interval.
More precisely, for any fixed value $\gamma=0.3, 0.5, 0.7, 0.8$ and for $\lambda$ varying from $0.5$ to $12$ by $0.5$, $3500$ independent samples of $n=100, 200$ are generated and the empirical coverages of the 0.95 confidence intervals are computed for $\gamma$ and $\lambda$ and depicted in Figure \ref{cov_gamma} and Figure \ref{cov_lambda}, respectively.
The figures  highlight that the empirical coverage of the interval estimators for both parameters attains the nominal level of $0.95$ for moderately large sample sizes.

	\begin{table}[!ht]
		\caption{Percentage values of RRMSE of $(\widehat \gamma,\widehat \lambda)$ for the positive stable distribution for some parameter choices.}
		
		\medskip
		\centering
		\small
		\begin{tabular}{lrrrrrr}
			\hline
			Model &  $\widehat{\gamma}_{100}$ & $ \widehat \lambda_{100}$ & $\widehat{\gamma}_{200}$ & $ \widehat \lambda_{200}$ & $ \widehat \gamma_{300}$ & $\widehat \lambda_{300}$  \\
			\hline
			$\mathrm{PS}(0.3,2)$   			& 11.84 & 12.46 & 8.34 &  8.49 & 6.77 &  6.88\\							
			$\mathrm{PS}(0.4,5)$   			&  9.19 & 15.44 & 6.50 & 10.57 & 5.30 &  8.61\\
			$\mathrm{PS}(0.5,15)$  			&  7.31 & 18.81 & 5.19 & 12.96 & 4.22 & 10.54\\
			$\mathrm{PS}(0.6,20)$  			&  5.88 & 15.70 & 4.15 & 10.87 & 3.38 &  8.88\\
			\hline
		\end{tabular}
		
		\label{tab:rrmse_ps}
	\end{table}

	\begin{table}[!ht]
		\caption{Percentage values of RRMSE of $(\widehat \gamma,\widehat \lambda,\widehat \theta)$ for the Tweedie distribution for some parameter choices.}
		
		\medskip
		\centering
	    \resizebox{\textwidth}{!}{
		
		\begin{tabular}{lrrrrrrrrr}
			\hline
				Model & $\widehat\gamma_{500}$ & $\widehat\lambda_{500}$ & $\widehat\theta_{500}$ & $\widehat\gamma_{1000}$ & $\widehat\lambda_{1000}$ & $\widehat\theta_{1000}$ & $\widehat\gamma_{1500}$ & $\widehat\lambda_{1500}$ & $\widehat\theta_{1500}$ \\
			\hline
			$\mathrm{TW}_0(1,1,0.1)$   & 28.66 & 37.49 & 23.13 & 19.56 & 19.73 & 15.80 & 16.02 & 15.08 & 13.12  \\
			$\mathrm{TW}_0(1,1.25,0.2)$  & 22.61 & 48.19 & 26.53 & 15.76 & 27.10 & 18.25 & 12.73 & 20.44 & 14.89  \\
			$\mathrm{TW}(0.5,2,0.5)$     & 9.54 & 18.39 & 24.84 &  6.73 & 12.14 & 17.62 &  5.46 &  9.72 & 14.19  \\
			$\mathrm{TW}(0.6,2.5,0.6)$   & 7.04 & 13.94 & 21.34 &  4.83 &  8.91 & 14.38 &  3.89 &  7.09 & 11.69  \\
			\hline
		\end{tabular}
     	}
		
		\label{tab:rrmse_tw}
	\end{table}
	
	\begin{figure}[!ht]
		\centering
		\includegraphics[width=0.9\textwidth]{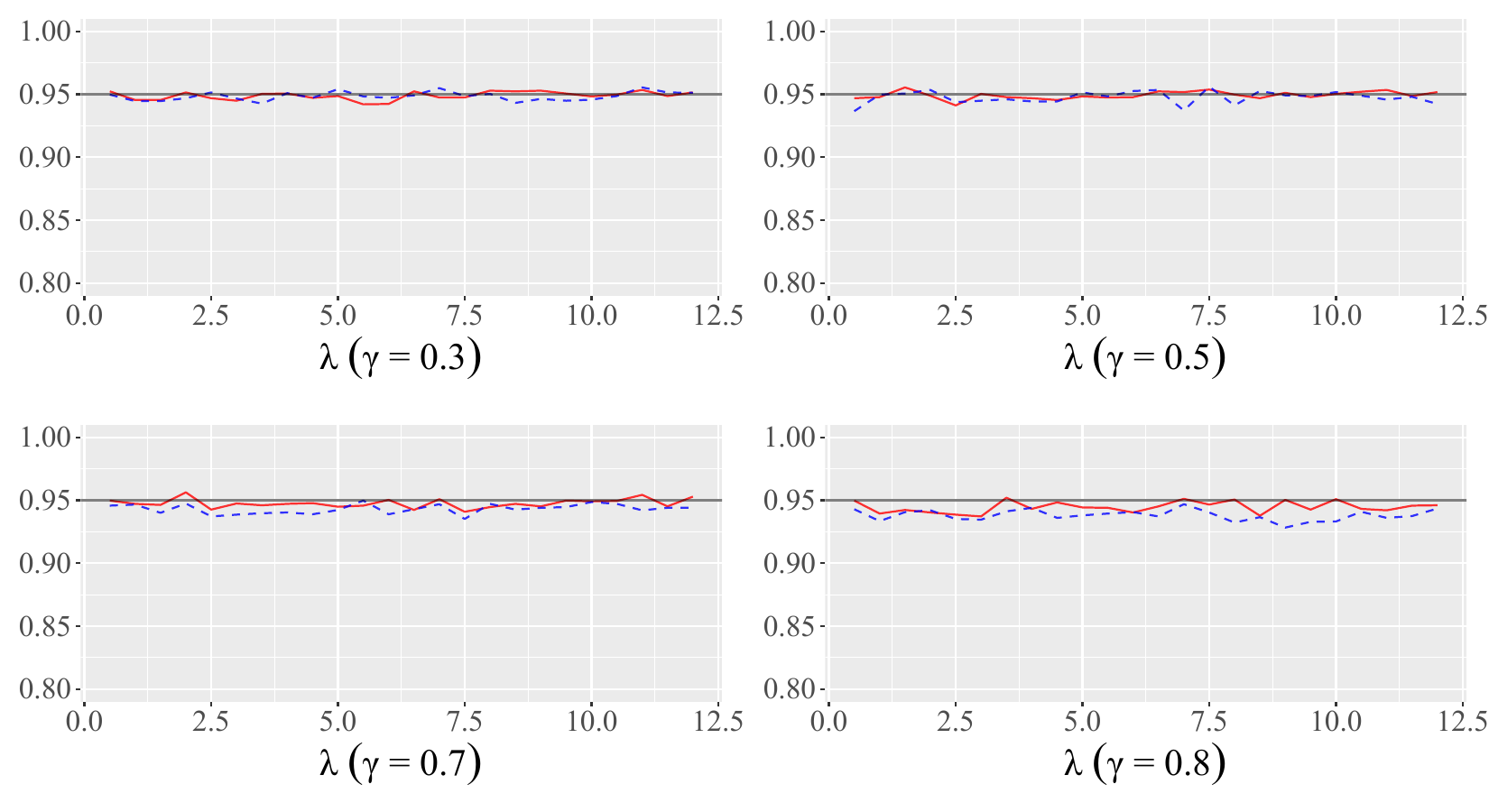}
		\caption{Empirical coverage of the 0.95 confidence interval for $\gamma$
 in positive stable distributions for different choices
			of 
$\gamma$ and $\lambda$. Dashed line for $n=100$ and solid line for $n=200$.}
		\label{cov_gamma}
	\end{figure}

	\begin{figure}[!ht]
		\centering
		\includegraphics[width=0.9\textwidth]{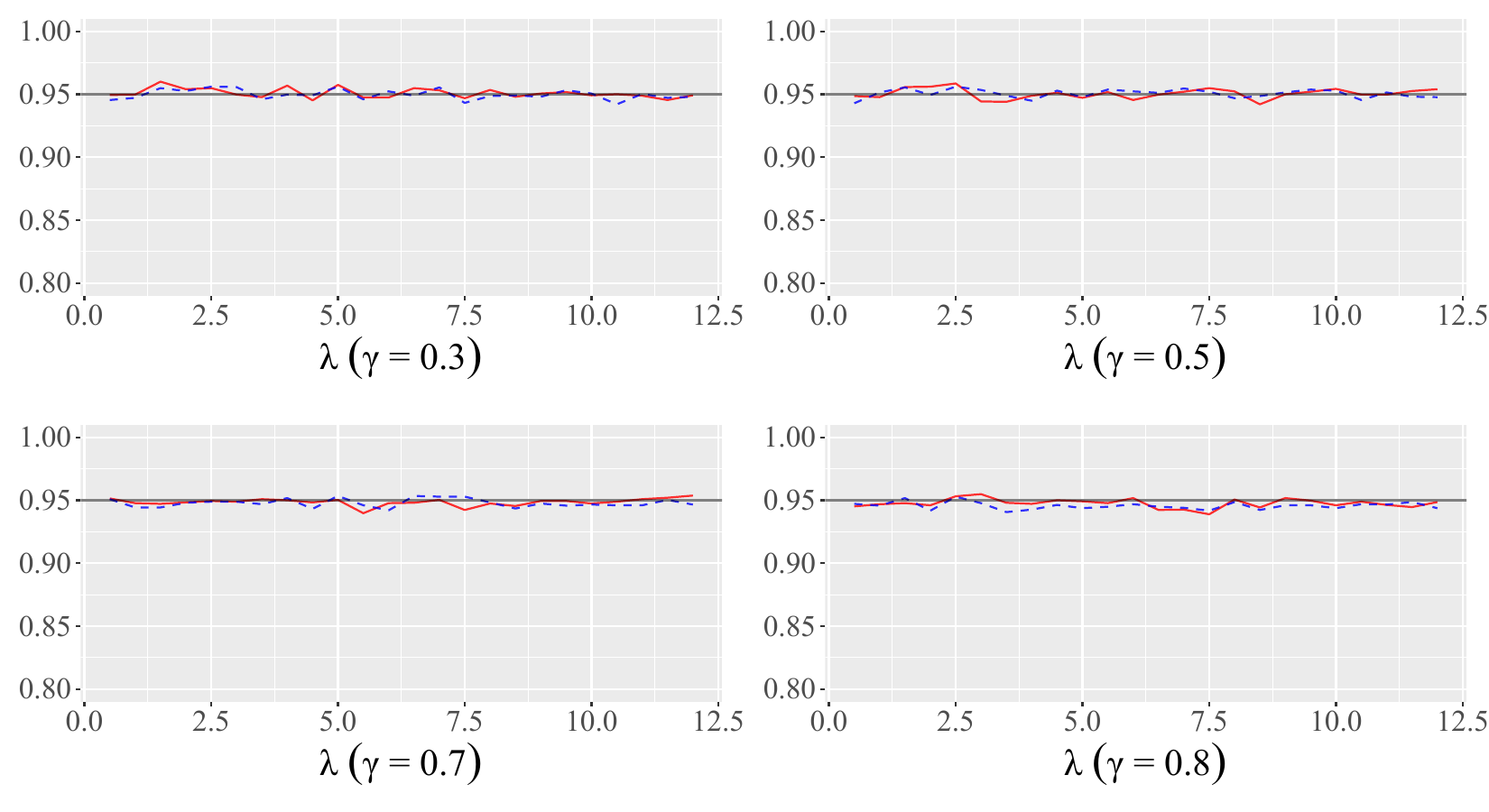}
		\caption{Empirical coverage of the 0.95 confidence interval for $\lambda$
 in positive stable distributions for different choices
			of 
$\gamma$ and $\lambda$. Dashed line for $n=100$ and solid line for $n=200$.}
		\label{cov_lambda}
	\end{figure}

Concerning the GOF, both the testing procedures based on \eqref{postest} and \eqref{twtest} are evaluated in terms of the empirical significance and power levels. First,  $3500$ independent samples are generated under the null hypothesis $H_0$, which represents the functional hypothesis that the target r.v.\ is distributed according to the positive stable distribution or the Tweedie distribution. 
Then, the empirical significance level is computed as the proportion of rejections at the nominal level equal to $0.05$.

Under the positive stable model, we obtain the empirical significance level for various representative parameter choices and for samples of size $n=100,200,300$ (see Table \ref{tab:sig_level_ps}). The empirical significance level is also depicted in Figure \ref{fig:sig_level_ps} for $\gamma=0.3,0.5,0.7,0.8$ and $\lambda$ varying from $0.5$ to $12$ by $0.5$, for $n=100,200$. From Table \ref{tab:sig_level_ps} and Figure \ref{fig:sig_level_ps}, it is apparent that the test shows a satisfactory significance level, approaching the nominal level as $n$ increases, albeit it is slightly conservative for small $\gamma$ when the sample size is small. Analogously, under the Tweedie model, we compute the empirical significance level for some parameter choices for $n=500,1000,1500$. The parameter values when $\gamma<0$ are given in Table \ref{tab:param_conversion}. The results are reported in Table \ref{tab:sig_level_tw}. As it could be expected, the testing problem is much more challenging for the Tweedie GOF test owing to the additional parameter. Indeed, the test achieves the nominal level for a larger sample size such as $n=1500$. Anyway, it should be remarked that the test is conservative.

	\begin{table}[!ht]
		
		\centering
		\caption{Percentage values of the empirical significance level (nominal level 0.05) of the positive stable GOF test for some parameter choices.}
		
		\medskip
		\label{tab:sig_level_ps}
		\small
		\begin{tabular}{lrrr}
			\hline
			Model &$T_{100}$ & $ T_{200}$ & $T_{300}$ \\
			\hline
			$\mathrm{PS}(0.3,2)$     & 2.83  & 3.94  & 4.14\\
			$\mathrm{PS}(0.4,5)$     & 3.49  & 3.89  & 4.34\\
			$\mathrm{PS}(0.5,15)$    & 3.69  & 4.74  & 5.20\\
			$\mathrm{PS}(0.6,20)$    & 3.97  & 4.54  & 4.89\\
			\hline
		\end{tabular}
	\end{table}
	
	\begin{figure}[!ht]
		
		\centering
		\includegraphics[width=0.9\textwidth]{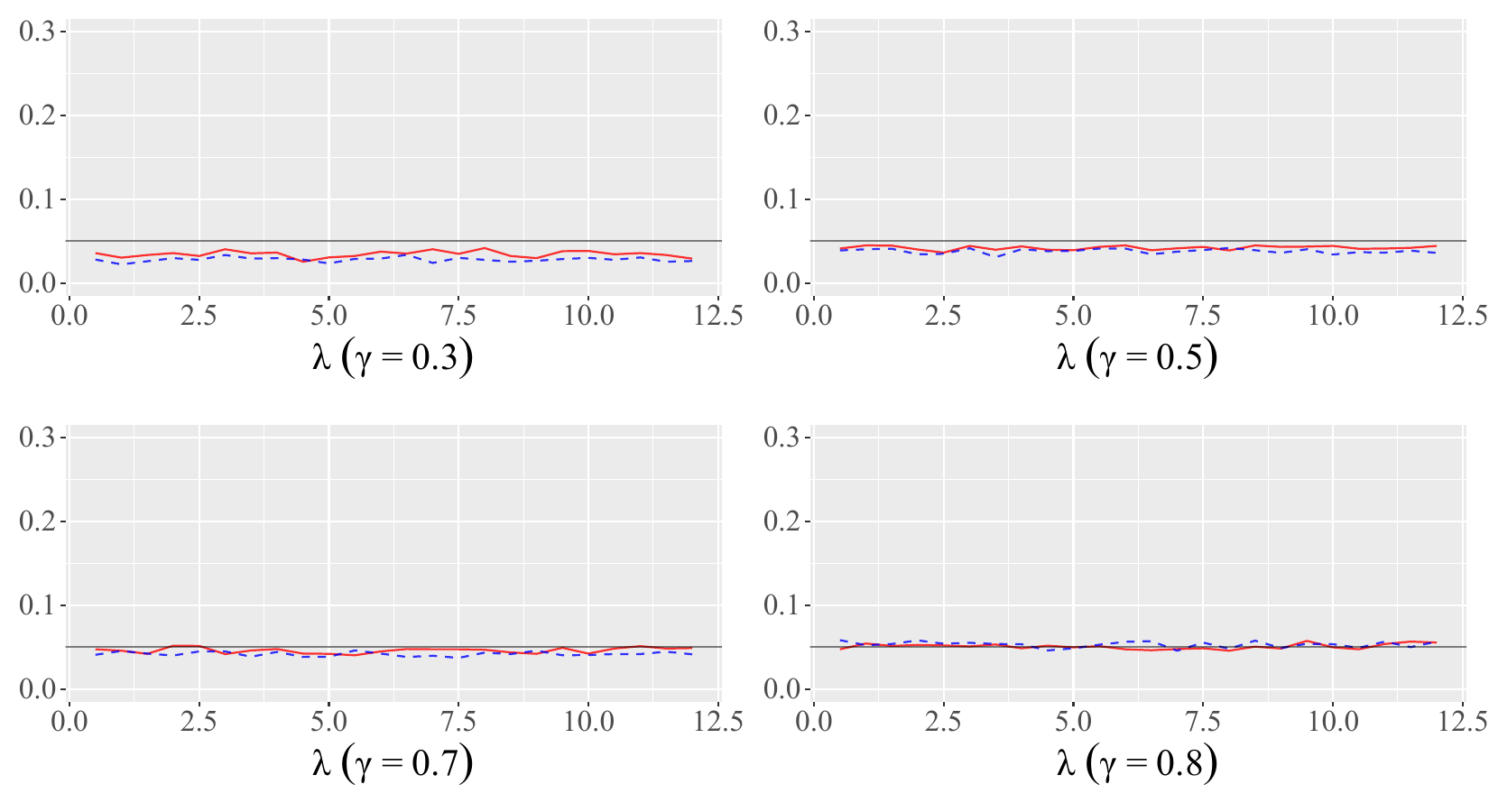}
		\caption{Empirical significance level (nominal level 0.05) for different choices
			of 
$\gamma$ and $\lambda$.  Dashed line for $n=100$ and solid line for $n=200$.}
		\label{fig:sig_level_ps}
	\end{figure}

	\begin{table}[!ht]
		
		\centering
		\caption{Percentage values of the empirical significance level (nominal level 0.05) of the Tweedie GOF test for some parameter choices.}
		
		\medskip
		\label{tab:sig_level_tw}
		\small
		\begin{tabular}{lrrrr}
			\hline
			Model &$\widetilde T_{300}^\prime$ & $\widetilde T_{500}^\prime$ & $\widetilde T_{1000}^\prime$ & $\widetilde T_{1500}^\prime$ \\
			\hline
			$\mathrm{TW}_0(0.75,0.5,0.1)$  & 1.17  & 2.11  & 3.31   & 3.43\\
			$\mathrm{TW}_0(1,1,0.1)$       & 1.17  & 2.43  & 3.14   & 3.49\\
			$\mathrm{TW}_0(1,1.25,0.2)$    & 1.09  & 2.03  & 2.66   & 3.20\\
			
			$\mathrm{TW}(0.5,2,0.5)$         & 1.06 & 1.83 & 3.77 & 3.80\\
			$\mathrm{TW}(0.6,2.5,0.6)$      & 0.17 & 1.49 & 3.26 & 3.40\\
			\hline
		\end{tabular}
	\end{table}

	As to the power of the two tests, we consider as alternative distributions the Linnik law denoted by $\mathrm{LI}(\gamma,\lambda,\delta)$, the shape-scale Pareto law denoted by $\mathrm{PA}(\alpha,\beta)$ with density function $f(x)=\alpha \beta^\alpha / x^{\alpha+1}\mathbf{1}_{\{x\geq \beta\}}$, and the shape-scale Weibull law denoted by $\mathrm{WE}(k,\lambda)$ with density function $f(x)=(k/ \lambda) (x/\lambda)^{k-1} \exp (-(x/\lambda)^k)$.
	Moreover, we consider the log-normal law denoted by $\mathrm{LN}(\mu,\sigma)$ and, as a further alternative, $\mathrm{LN}^{1/2}(\mu,\sigma)$, that represent respectively the laws of the r.v.s $Y=e^{X}$ and $Y=e^{X^2}$, where $X$ has normal law $\mathrm{N}(\mu,\sigma)$. We also consider some zero-inflated distributions, where the third parameter represents the probability of observing a zero value. Such laws are indicated by adopting a null subscript, e.g.\ $\mathrm{PA}_0(\alpha,\beta,p)$.
	The empirical powers of the tests, computed as the proportion of rejections when data are generated according to alternative distributions, are reported in Table \ref{tab:power_ps} and Table \ref{tab:power_tw} under the null hypothesis of positive stable or Tweedie distributions, respectively. 
	In particular, for each alternative distribution, we generate $3500$ independent samples of size $n=100, 200, 300$ when performing GOF test at significance level $0.05$ for the positive stable distribution and of size $n=300,500, 1000, 1500$ for the Tweedie distribution.
	
	 From Table \ref{tab:power_ps}, the positive stable GOF show an appreciable power for all the considered alternatives. In particular, it achieves an elevate power for the $\mathrm{PA}$ and $\mathrm{LN}^{1/2}$ distribution. In contrast $\mathrm{LI}$ is similar to $\mathrm{PS}$, justifying a smaller power.
	The results in Table \ref{tab:power_tw} evidence that the Tweedie GOF test is consistent, even if the increase in the number of parameters leads to a more challenging testing problem in terms of type 2 error. Indeed, larger samples are necessary to achieve a power comparable to the positive stable case, even if the power is satisfactory for $n$ larger than $500$. Moreover, under zero-inflated models, the procedure achieves an increasing performance as the zero probability increases.

	\begin{table}[!ht]
		\centering
		\caption{Percentage values of the empirical power of the positive stable GOF test for some selected alternative distributions.}
		
		\medskip
		\label{tab:power_ps}
			\small
		\begin{tabular}{lrrr}
			\hline
			Model &$T_{100}$ & $ T_{200}$ & $T_{300}$ \\
			\hline
			$\mathrm{LN}(0,1.5)$           & 63.80  &  95.66   &  99.69\\
			$\mathrm{PA}(5,2)$            & 97.77  &  99.43   &  99.97\\
			$\mathrm{PA}(10,2)$           & 99.97  & 100.00   &  99.97\\
			$\mathrm{LI}(0.5,2,0.5)$      & 10.03  &  27.43   &  43.91\\
			$\mathrm{LI}(0.5,2,0.75)$         & 10.51  &  31.51   &  52.14\\
			$\mathrm{LN}^{1/2}(0,1.5)$         & 87.97  &  99.29   & 100.00\\
			$\mathrm{LN}^{1/2}(0,3)$        & 51.09  &  81.57   &  93.11\\
			\hline
		\end{tabular}
	\end{table}
	\begin{table}[!ht]
		\centering
		\caption{Percentage values of the empirical power of the Tweedie GOF test for some selected alternative distributions.}
		
		\medskip
		\label{tab:power_tw}
			\small
		\begin{tabular}{lrrrr}
			\hline
			Model &$\widetilde T_{300}^\prime$ & $\widetilde T_{500}^\prime$ & $\widetilde T_{1000}^\prime$ & $\widetilde T_{1500}^\prime$ \\
			\hline
			$\mathrm{LN}(0,1)$           & 17.66  &  34.60  &  56.31   &  68.91\\
			$\mathrm{WE}(5,1)$           & 20.46  &  42.03  &  76.00   &  90.57\\
			$\mathrm{PA}(10,2)$          & 15.03  &  26.60  &  52.06   &  61.49\\
			
			$\mathrm{LN}_0(1,0.75,0.1)$    & 69.74  &  96.91  & 100.00   & 100.00\\
			$\mathrm{LN}_0(1,0.75,0.2)$    & 75.17  &  96.57  & 100.00   & 100.00\\
			$\mathrm{LN}_0(5,1,0.1)$       & 27.51  &  58.43  &  93.51   &  99.11\\
			$\mathrm{LN}_0(5,1,0.2)$       & 77.46  &  98.86  & 100.00   & 100.00\\
			$\mathrm{WE}_0(3,1,0.1)$       & 74.11  &  99.57  & 100.00   & 100.00\\
			$\mathrm{WE}_0(3,1,0.2)$       & 78.69  &  99.97  & 100.00   & 100.00\\
			$\mathrm{WE}_0(5,1,0.1)$       & 99.83  & 100.00  & 100.00   & 100.00\\
			$\mathrm{WE}_0(5,1,0.2)$       & 93.14  & 100.00  & 100.00   & 100.00\\
			$\mathrm{PA}_0(5,2,0.1)$       & 20.77  &  41.49  &  69.29   &  85.49\\
			$\mathrm{PA}_0(5,2,0.2)$       & 65.26  &  99.37  & 100.00   & 100.00\\
			$\mathrm{PA}_0(10,2,0.1)$      & 99.86  & 100.00  & 100.00   & 100.00\\

			\hline
		\end{tabular}
	\end{table}
	
	\begin{table}[!ht]
		
		\centering
		\caption{Correspondence between the values of $(\gamma,\lambda,\theta)$ and $(\mu,w,p)$ for the Tweedie distribution from \eqref{alternative_param}.}
		
		\medskip
		\label{tab:param_conversion}
			\small
		\begin{tabular}{lccc}
			\hline
			Model &$\gamma$ & $ \lambda$ & $\theta$ \\
			\hline
			$\mathrm{TW}_0(0.75,0.5,0.1)$   & -1.8689607  & 60.297348  &  5.737921\\
			$\mathrm{TW}_0(1,1,0.1)$        & -0.7677042  &  3.565768  &  1.767704\\
			$\mathrm{TW}_0(1,1.25,0.2)$     & -0.9883402  &  2.546270  &  1.590672\\
			\hline
		\end{tabular}
	\end{table}

	\section{Discussion and concluding remarks}\label{discu}

	We propose a new general inferential approach based on the Laplace transform of a random variable, that relies on exponential random censoring. This approach is useful to derive tractable parameter estimators that are used to propose suitable GOF tests for some non-standard families of random variables. The proposed estimators and test statistics are proven to be asymptotically normal, leading to computationally convenient inferential procedures. We show how exponential censoring can be used to build parameter estimators and GOF tests for the positive stable and for the Tweedie laws for which specific tests are not present in the literature. The simulation study shows that the finite-sample performance of the proposed procedures is rather satisfactory even in the three-parameter case for a moderately large sample size.

\bibliographystyle{apalike}      
\bibliography{bib_Laplace}   
\end{document}